\newtheorem{thm}{Theorem}[section]
\newtheorem*{thm*}{Theorem}
\newtheorem{lem}[thm]{Lemma}
\theoremstyle{definition}
\numberwithin{equation}{section}
\newcommand{\newabstract}[1]{%
	\par\bigskip
	\csname otherlanguage*\endcsname{#1}%
	\csname captions#1\endcsname
	\item[\hskip\labelsep\scshape\abstractname.]
}
\begin{document}

	\baselineskip=17pt

	\title[Large values of character sums with multiplicative coefficients]{Large values of character sums with multiplicative coefficients}

	\author{Zikang Dong\textsuperscript{1}}
    \author{Zhonghua Li\textsuperscript{2}}
	\author{Yutong Song\textsuperscript{2}}
    \author{Shengbo Zhao\textsuperscript{2}}
	\address{1.School of Mathematical Sciences, Soochow University, Suzhou 215006, P. R. China}
	\address{2.School of Mathematical Sciences, Tongji University, Shanghai 200092, P. R. China}
    
	\email{zikangdong@gmail.com}
    \email{zhonghua.li@tongji.edu.cn}
	\email{99yutongsong@gmail.com}
	\email{shengbozhao@hotmail.com}
	\date{\today}
	
	\begin{abstract} 
		In this article, we investigate  large values of  Dirichlet character sums with multiplicative coefficients $\sum_{n\le N}f(n)\chi(n)$. We prove an Omega result in the region $\exp((\log q)^{\frac12+\varepsilon})\le N\le\sqrt q$, where $q$ is the prime modulus.
	\end{abstract}

	\subjclass[2020]{Primary 11L40, 11M06.}
	
	\maketitle
	
	\section{Introduction}
For any large prime number $q$ and any character $\chi({\rm mod}\;q)$, we always have the P\'olya-Vinogradov inequality for any $N>0$
   $$\sum_{n\le N}\chi(n)\ll\sqrt q\log Q,$$
   where $Q=q$ unconditionally and $Q=\log q$ on the generalized Riemann Hypothesis (GRH). The conditional upper bound is optimal up to the implied constant, since Paley showed for any large $q$, there always exists a quadratic character $\chi$ and $N<q$ such that
   $$\sum_{n\le N}\chi(n)\gg\sqrt q\log_2 q.$$
   We use $\log_j$ to denote the $j$-th iteration of logarithm. When $N$ is fixed, consider the maximum
   $$S_q(N):=\max_{\chi_0\neq\chi({\rm mod }\;q)}\Big|\sum_{n\le N}\chi(n)\Big|.$$
   Granville and Soundararajan \cite{GS01} conjectured $S_q(x)$ increases on $N<\sqrt q$. They showed several evidence to believe this. That is, lower bounds for $S_q(N)$ according to the range of $N$. See Theorems 3--8 of \cite{GS01}. Part of these have been improved separately by Munsch \cite{Mun}, Hough \cite{Hou} and La Bret\`eche and Tenenbaum \cite {BT}.These results are mainly divided into two cases: the case $x$ is less than $\exp(\sqrt{\log q})$, and the case  x is between $\exp((\log q)^{\frac12+\varepsilon})$ and $\sqrt q$. When $x>\sqrt q$, we can use the ``Fourier flips" for character sums to transfer to the case of $x<\sqrt q$.

 This article focuses on the values of the following sums with multiplicative coefficients
 $$\sum_{n\le N}f(n)\chi(n),$$
 where $f(n)$ is a multiplicative function. We may expect this sum can be as large as $\sum_{n\le N}\chi(n)$.
 Harper \cite{Harper23} showed that if $|f(p)|=1$ and $|f(p^j)|\le1$ for any prime $p$ and integer $j$, then the low moments ($0<k<1$ fixed) can be bounded with better than square-root cancellation:
 $$\frac{1}{\varphi(q)}\sum_{\chi({\rm mod}\;q)}\Big|\sum_{n\le N}f(n)\chi(n)\Big|^{2k}\ll \bigg(\frac{N}{\sqrt{\log_2(\min\{N,q/N\})}}\bigg)^k.$$
 Harper conjectured that, if $f(\cdot)$ is not always 1, then the minimum $\min\{N,q/N\}$ in the above inequality can be replaced by $N$. This means the ``Fourier flips" for character sums no longer influence the mix sums when $\sqrt q<N<q$. Thus the character sums with multiplicative coefficients  also play important roles in analytic number theory as character sums.
\begin{thm}\label{thm1.1}
 Fix $\delta \in(0,\frac1{100})$ be any fixed small number. Let $q$ be sufficiently large prime and $\exp((\log q)^{\frac12+\delta})\le N\le q^{\frac12}$ . Let $f(\cdot)$  be any completely multiplicative function such that $|f(n)|=1$ for any integer $n$. Then we have
    $$ \max_{\chi({\rm mod}\;q)\atop \chi\neq\chi_0}\Big|\sum_{n\le N}f(n)\chi(n)\Big|\ge \sqrt{N}\exp\bigg((1+o(1)){\sqrt{\frac{\log (q/N)}{\log_2 (q/N)}}}\bigg).$$ 
\end{thm}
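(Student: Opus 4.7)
The plan is to adapt the resonator method used to establish the Omega result for the standard character sum in this range (Granville--Soundararajan \cite{GS01}, Hough \cite{Hou}, La~Bret\`eche--Tenenbaum \cite{BT}) via a simple twisting trick that annihilates the multiplicative coefficient $f$. Set $M := q/N$ and $A(\chi) := \sum_{n \le N} f(n)\chi(n)$. Choose a parameter $y$ and a non-negative multiplicative function $g$ supported on squarefree integers with prime factors in $[2, y]$ (to be optimised later), and form the \emph{twisted resonator}
$$R(\chi) := \sum_{m \le M/2} f(m)\, g(m)\, \chi(m).$$
The aim is to bound $(\max_\chi |A(\chi)|)^2$ from below by the ratio $\sum_{\chi \ne \chi_0} |R(\chi) A(\chi)|^2 / \sum_\chi |R(\chi)|^2$.

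Next I would expand $\sum_\chi |RA|^2$ via character orthogonality. Since $R$ is supported on $m \le M/2$ and $A$ on $n \le N$, every product $m_i n_i$ is at most $q/2$, so the congruence $m_1 n_1 \equiv m_2 n_2 \pmod q$ forces the equality $m_1 n_1 = m_2 n_2$. The key observation is that for any such solution with common value $k$, complete multiplicativity and $|f|=1$ give
$$\bigl(f(m_1)g(m_1)\bigr)\,\overline{\bigl(f(m_2)g(m_2)\bigr)}\, f(n_1)\overline{f(n_2)} = f(k)\overline{f(k)}\, g(m_1)g(m_2) = g(m_1)g(m_2),$$
so every factor of $f$ cancels. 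Parametrising $m_1 n_1 = m_2 n_2$ by $d = \gcd(m_1, m_2)$, $m_i = d a_i$ with $\gcd(a_1, a_2) = 1$, $n_1 = a_2 j$ and $n_2 = a_1 j$, we arrive at the $f$-free expression
\begin{equation*}
\sum_{\chi \ne \chi_0} |R(\chi) A(\chi)|^2 = \varphi(q)\!\! \sum_{\substack{d, a_1, a_2, j \\ \gcd(a_1, a_2) = 1 \\ d\max(a_1, a_2) \le M/2 \\ j\max(a_1, a_2) \le N}}\!\! g(d a_1)g(d a_2) \;-\; |R(\chi_0) A(\chi_0)|^2.
\end{equation*}
Together with $\sum_\chi |R|^2 = \varphi(q)\sum_m g(m)^2$, the diagonal $a_1 = a_2 = 1$ already yields the square-root baseline $(\max_\chi |A|)^2 \ge N$.

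The improvement beyond $\sqrt{N}$ comes from the off-diagonal. Using multiplicativity of $g$ on squarefrees, the inner $d$-sum factors as $\sum_{d:\,\gcd(d, a_1 a_2) = 1} g(d)^2$; dividing by $\sum_m g(m)^2$ then leaves an expression of the shape
$$(\max_\chi |A(\chi)|)^2 \ge N\Bigg(1 + \sum_{\substack{(a_1, a_2) \ne (1,1) \\ \gcd(a_1, a_2) = 1}} \frac{g(a_1)g(a_2)}{\max(a_1, a_2)} \prod_{p \mid a_1 a_2}\!\!(1 + g(p)^2)^{-1}\Bigg).$$
From here the estimation is structurally identical to the unweighted character-sum problem, so I would import the classical optimisation of \cite{GS01,Hou,BT}: pick $g$ on primes in a suitable range with weights tuned so that the Euler product on the right reaches $\exp\bigl((2 + o(1))\sqrt{\log M / \log_2 M}\bigr)$. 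Taking square roots then produces the claimed bound.

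The main obstacle is the off-diagonal analysis near $N \asymp \sqrt{q}$, where $M \asymp N$ and both cutoffs $d\max(a_i) \le M/2$ and $j\max(a_i) \le N$ bind at the same scale: the coprimality restrictions inside the $a_i$-sum then shorten the effective Euler product range enough to threaten the constant $1 + o(1)$ in the exponent, so a careful accounting is required. A secondary point is dominating the trivial-character term $|R(\chi_0) A(\chi_0)|^2$: using $|A(\chi_0)| \le N$ and a Cauchy--Schwarz estimate on $|R(\chi_0)|$, the hypothesis $N \ge \exp((\log q)^{\frac12 + \delta})$ is precisely what is needed to absorb this error into the main term. Once both are handled, the twist $r = fg$ has reduced the problem to the classical resonator computation for character sums in this range.
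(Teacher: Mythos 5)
Your proposal is essentially the paper's proof: the authors likewise twist the resonator coefficients by $f$ (taking $r_f(n)=f(n)r(n)$ with resonator length $X=q/N$), use complete multiplicativity and $|f(n)|=1$ to cancel every $f$-factor on the forced diagonal $an=bm$, bound the principal-character contribution by Cauchy--Schwarz, and then quote the classical Hough-type GCD-sum estimate (their Lemma~\ref{lem2.1}, taken from Eq.~(2.8) of \cite{XY}, which rests on \cite{Hou}) to obtain the factor $\exp\bigl((2+o(1))\sqrt{\log(q/N)/\log_2(q/N)}\bigr)$ before taking square roots. One small correction to your commentary: the hypothesis $N\ge\exp((\log q)^{\frac12+\delta})$ is not what absorbs the trivial-character term --- that error is only $O(N)$ after dividing by $\sum_{\chi}|R(\chi)|^2$ and is negligible in any case --- rather, it is exactly what guarantees the condition $\log N>3\lambda\log_2\lambda$ (with $\lambda=\sqrt{\log X\log_2 X}$) in the GCD-sum lemma, i.e.\ that $N$ is long enough for the off-diagonal resonance built from primes up to $\exp((\log\lambda)^2)$ to fit below $N$.
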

 Our theorem can be seen as a generalization of Theorem 3.1 of Hough's work \cite{Hou}, who showed it with $f(\cdot)=1$. Recently, Xu and Yang \cite{XY} also showed a similar result with $\chi(n)$ replaced by $n^{it}$. 
\section{Preliminary lemmas}
We need the following lemma which is the main technique in operating the resonance method.
\begin{lem}\label{lem2.1}
    Let $X$ be large and $\lambda=\sqrt{\log X\log_2 X}$. Define the multiplicative function $r(\cdot)$  supported on square-free integers:
$$r(p)=\begin{cases}
   \frac{\lambda}{\sqrt q \log p}, &  \lambda\le p\le \exp((\log\lambda)^2),\\
   0, & {\rm otherwise.}
\end{cases}$$
If $\log N>3\lambda\log_2\lambda$, then we have
\begin{equation*}\label{DD}
    \sum_{\substack{m,n\le N\\a,b\le X\\an=bm}}r(a)r(b)\Big/\sum_{n\le X}r(n)^2\ge N\exp{\bigg((2+o(1))\sqrt{\frac{\log X}{\log_2 X}}}\bigg).
\end{equation*}
\end{lem}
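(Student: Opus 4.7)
The plan is to parameterize solutions $(a,b,m,n)$ with $an=bm$, extract an Euler-product main term, and compare it to the denominator $\sum_{n\le X}r(n)^2\approx\prod_p(1+r(p)^2)$. First, I set $d=\gcd(a,b)$, $a=dA$, $b=dB$ with $(A,B)=1$; then $an=bm$ forces $n=Bt$, $m=At$ for some positive integer $t\le N/\max(A,B)$. Since $r$ is supported on squarefree integers, the triple $d,A,B$ is pairwise coprime and $r(a)r(b)=r(d)^2r(A)r(B)$, so the numerator becomes
\begin{equation*}
\sum_d r(d)^2\sum_{\substack{(A,B)=(AB,d)=1\\dA,dB\le X}}r(A)r(B)\left\lfloor\frac{N}{\max(A,B)}\right\rfloor.
\end{equation*}

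Next, using $\lfloor N/\max(A,B)\rfloor\ge N/\max(A,B)-1$ separates a main term of order $N$ times a Dirichlet-type sum from an error bounded by $(\sum_n r(n))^2$. The hypothesis $\log N>3\lambda\log_2\lambda$ controls tails, ensuring that the mass of the resonator $r$ that matters for the main term is concentrated on integers whose sizes are compatible with the truncation $\max(A,B)\le N$, so the error is of lower order than the main contribution.

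The crux is to estimate $\sum_{(A,B)=(AB,d)=1} r(A)r(B)/\max(A,B)$ from below. Since $1/\max(A,B)$ is not multiplicative in $A$ and $B$, I would use the symmetry $A\leftrightarrow B$ to restrict to $A\le B$ (with the replacement $1/\max=1/B$), then perform the $A$-sum first as a partial Euler product over primes coprime to $Bd$, followed by the $B$-sum and the $d$-sum. Combining these factorizations and dividing by $\prod_p(1+r(p)^2)$ yields a product of local factors depending on $r(p)$. By prime number theorem estimates on $\sum_p r(p)^2$ and $\sum_p r(p)/p$ for the specific choice $r(p)=\lambda/(\sqrt q\log p)$ on $[\lambda,\exp((\log\lambda)^2)]$, the logarithm of this product evaluates to $(2+o(1))\sqrt{\log X/\log_2 X}=(2+o(1))\lambda/\log_2 X$, giving the claimed lower bound.

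The hardest step is the accurate evaluation of the symmetric Euler-type sum while preserving the constant $2$ in the exponent; the asymmetric restriction $A\le B$ introduces $B$-dependent truncations that must be treated carefully so as not to lose the leading-order main term. A related delicate point is tracking that the combined truncations from $dA,dB\le X$ and from the floor function produce only lower-order errors, which is precisely where the technical hypothesis on $\log N$ enters.
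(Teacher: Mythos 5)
Your opening reduction is the right one (and it is the same one underlying the source the paper actually quotes: the paper proves this lemma only by citing Eq.\ (2.8) of Xu--Yang, which in turn follows Hough): writing $d=\gcd(a,b)$, $a=dA$, $b=dB$, $n=Bt$, $m=At$ and using squarefree support to get $\sum_d r(d)^2\sum_{(A,B)=1,(AB,d)=1}r(A)r(B)\lfloor N/\max(A,B)\rfloor$ is exactly how these estimates begin. The genuine gap is that everything which makes the constant $2$ come out is in the step you only gesture at, and the mechanism you name for it is the wrong one. With the intended normalization $r(p)=\lambda/(\sqrt p\log p)$ (the $\sqrt q$ in the displayed definition is a typo; with $\sqrt q$ literally the lemma is empty), the gain comes from $\sum_p r(p)/\sqrt p=\sum_p\lambda/(p\log p)=(1+o(1))\lambda/\log\lambda=(2+o(1))\sqrt{\log X/\log_2 X}$, i.e.\ one full factor $\prod_p(1+r(p)/\sqrt p)$. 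It does not come from ``PNT estimates on $\sum_p r(p)^2$ and $\sum_p r(p)/p$'': the second sum is $O(\sqrt\lambda/\log^2\lambda)=o(\sqrt{\log X/\log_2 X})$ and cannot produce the main term, and $\sum_p r(p)^2$ only concerns the denominator. To push the weight $\prod(1+r(p)/\sqrt p)$ through the non-multiplicative factor $1/\max(A,B)$ one has to show that, under the measures proportional to $r(A)/\sqrt A$ and $r(B)/\sqrt B$, the variables $\log A$ and $\log B$ concentrate near $\lambda\log_2\lambda$ with fluctuations $O(\sqrt\lambda\log\lambda)=o(\lambda/\log\lambda)$, so that $\sqrt{\min(A,B)/\max(A,B)}=\exp(-o(\lambda/\log\lambda))$ on a set carrying most of the mass, and simultaneously that the truncations $dA,dB\le X$ against the truncated denominator $\sum_{n\le X}r(n)^2$ (whose mass for this $r$ sits close to the cutoff $X$) cost only $\exp(o(\lambda/\log\lambda))$. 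This concentration-of-mass analysis is the actual content of Hough's argument; ``perform the $A$-sum as a partial Euler product over primes coprime to $Bd$'' cannot replace it, because the truncations $A\le B$ and $dA\le X$ destroy the Euler factorization --- which is precisely the difficulty you acknowledge but do not resolve.

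A second, concrete flaw is the error treatment. Bounding the loss from $\lfloor N/\max(A,B)\rfloor\ge N/\max(A,B)-1$ by $(\sum_n r(n))^2$ is not ``of lower order'': by Rankin's trick $\sum_{A\le N}r(A)$ can be as large as $\sqrt N\exp((1+o(1))\lambda/\log\lambda)$, so your error bound is of size $N\exp((2+o(1))\lambda/\log\lambda)$, which exceeds the main term $N\exp((1+o(1))\lambda/\log\lambda)$ you want to keep. The standard fix is positivity: discard all quadruples with $\max(A,B)>N^{1/2}$, and note that the hypothesis $\log N>3\lambda\log_2\lambda$ is exactly what places the concentrated range $\max(A,B)=\exp((1+o(1))\lambda\log_2\lambda)$ below $N^{1/3}$, where $\lfloor N/\max(A,B)\rfloor\ge\tfrac12N/\max(A,B)$. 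In summary, your skeleton coincides with the cited proof's first step, but the inequality itself is not established: to make this lemma self-contained you would need to reproduce the concentration estimates of Hough (as repackaged in Xu--Yang), not an Euler-product computation.
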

\begin{proof}
    This follows directly from Eq. (2.8) of \cite{XY}, which is based on the work of \cite{Hou}.
\end{proof}

        \section{Proof of Theorem \ref{thm1.1}}
Our proof employs the resonance method, which can date back to the work of Hilberdink \cite{HIL}, and was generally developed by Soundararajan\cite{Sound}.   For ease of representation, we denote
$$D_{N,f}(\chi):=\sum_{n\le N}f(n)\chi(n),$$
where $f$ is a completely multiplicative function with $|f(n)|=1$ for any $n\in\mathbb{N}$.
As in \cite{Sound}, we define the ``resonator''  $$R_\chi := \sum_{n\le X}r_f(n)\chi(n),$$where $r_f(n)$ is a function of the integer $n$, and $X:=q/N$. 
Then, set
$$M_1=M_1(R,q):=\sum_{\substack{\chi({\rm mod}\;q)\\\chi \neq \chi_0}}|R_\chi|^2,$$
and
$$M_2=M_2(R,q):=\sum_{\substack{\chi({\rm mod}\;q)\\\chi \neq \chi_0}}|D_{N,f}(\chi)|^2|R_\chi|^2.$$
Applying the idea of the resonance method, we obtain
\begin{equation}\label{resonnance}
    \max_{\chi({\rm mod}\;q)\atop \chi \neq \chi_0}\Big|\sum_{n\le N}f(n)\chi(n)\Big|\ge \sqrt{\frac{M_2}{M_1}}.
\end{equation}
Therefore, our main objective is to compute the ratio $M_2/M_1$. First, by the orthogonality of Dirichlet characters, we have
\begin{align*}
    M_1&=\sum_{\substack{\chi({\rm mod}\;q)\\\chi \neq \chi_0}}\sum_{a,b\le X}r_f(a)\overline{r_f(b)}\chi(a)\overline{\chi(b)}\\&\le \varphi(q)\sum_{\substack{a,b\le X\\a\equiv b({\rm mod}\;q)}}r_f(a)\overline{r_f(b)}.
\end{align*}
Since $0<X<q$, we can get $a=b$. Thus,
\begin{equation}
    M_1\le \varphi(q)\sum_{n\le X}|r_f(n)|^2.\label{m1}
\end{equation}
Expanding $M_2$, we have
\begin{align*}
    M_2&=\sum_{\substack{\chi({\rm mod}\;q)\\\chi \neq \chi_0}}|D_{N,f}(\chi)|^2|R_\chi|^2+|D_{N,f}(\chi_0)|^2|R_{\chi_0}|^2-|D_{N,f}(\chi_0)|^2|R_{\chi_0}|^2
    \\&=\varphi(q)\sum_{\substack{m,n\le N\\a,b\le X\\an\equiv bm({\rm mod}\;q)}}f(n)\overline{f(m)}r_f(a)\overline{r_f(b)}-|D_{N,f}(\chi_0)|^2|R_{\chi_0}|^2.
\end{align*}
Then $NX\le q$ impies that $an,bm\le q.$ Hence $an\equiv bm({\rm mod}\;q)$ reduces to the only case $an=bm.$ Consider the part concerning the principal character. By Cauchy's inequality, we have
$$|R_{\chi_0}|^2\le X\sum_{n\le X}|r_f(n)|^2.$$
And similarly we can get
$$|D_{N,f}(\chi_0)|^2=\Big|\sum_{n\le N}f(n)\chi_0(n)\Big|^2 \le N\sum_{n\le N}|f(n)\chi_0(n)|^2\le N^2.$$
Combining the above together we get
\begin{equation*}
    M_2=\varphi(q)\sum_{\substack{m,n\le N\\a,b\le X\\an=bm}}f(n)\overline{f(m)}r_f(a)\overline{r_f(b)}+O(qN)\sum_{n\le X}|r_f(n)|^2.
\end{equation*}
Now we choose $$r_f(n):=f(n)r(n),$$where $r(n)$ is a non-negative multiplicative function. Since $|f(n)|=1$, we can obtain
\begin{equation*}
    M_2=\varphi(q)\sum_{\substack{m,n\le N\\a,b\le X\\an=bm}}r(a)r(b)+O(qN)\sum_{n\le X}|r_f(n)|^2.
\end{equation*}
Combined with \eqref{m1}, we have
\begin{equation*}\label{ratio}
    \frac{M_2}{M_1}\ge \sum_{\substack{m,n\le N\\a,b\le X\\an=bm}}r(a)r(b)\Big/\sum_{n\le X}r(n)^2+O(N).
\end{equation*}
Put $\lambda=\sqrt{\log X\log_2 X}$ and choose the multiplicative function $r(\cdot)$ to be supported on square-free integers satisfying
$$r(p)=\begin{cases}
   \frac{\lambda}{\sqrt q \log p}, &  \lambda\le p\le \exp((\log\lambda)^2),\\
   0, & {\rm otherwise,}
\end{cases}$$
as in Lemma \ref{lem2.1}. The range $\exp((\log q)^{\frac12+\delta})\le N\le q^{\frac12}$ and $X=q/N$ guarantee the condition $\log N>3\lambda\log_2\lambda$. Thus by Lemma \ref{lem2.1} we have
\begin{equation*}\label{DD}
     \frac{M_2}{M_1}\ge\sum_{\substack{m,n\le N\\a,b\le X\\an=bm}}r(a)r(b)\Big/\sum_{n\le X}r(n)^2\ge N\exp{\bigg((2+o(1))\sqrt{\frac{\log X}{\log_2 X}}}\bigg)+O(N).
\end{equation*}
At last, back to \eqref{resonnance} we can obtain
$$\max_{\substack{\chi({\rm mod}\;q)\\ \chi \neq \chi_0}}\Big|\sum_{n\le N}f(n)\chi(n)\Big|\ge \sqrt{\frac{M_2}{M_1}}\ge \sqrt{N}\exp\bigg((1+o(1)){\sqrt{\frac{\log (q/N)}{\log_2 (q/N)}}}\bigg). $$
\qed
	\section*{Acknowledgements}
	The authors are supported by the Shanghai Magnolia Talent Plan Pujiang Project (Grant No. 24PJD140) and the National
	Natural Science Foundation of China (Grant No. 	1240011770).

	\normalem

\end{document}